\title[A Geometric Interpretation of Higher-Order Hankel Operators]{A Geometric Interpretation and Explicit Form for Higher-Order Hankel Operators}
\author{Benjamin Pittman-Polletta}
\date{July, 2008}
\newtheorem{thm}{Theorem}[section]
\newtheorem{prop}[thm]{Proposition}
\newtheorem{lem}[thm]{Lemma}
\theoremstyle{definition}
\newtheorem{dfn}[thm]{Definition}
\DeclareSymbolFont{AMSb}{U}{msb}{m}{n}
\DeclareMathSymbol{\N}{\mathbin}{AMSb}{"4E}
\DeclareMathSymbol{\Z}{\mathbin}{AMSb}{"5A}
\DeclareMathSymbol{\R}{\mathbin}{AMSb}{"52}
\DeclareMathSymbol{\Q}{\mathbin}{AMSb}{"51}
\DeclareMathSymbol{\I}{\mathbin}{AMSb}{"49}
\DeclareMathSymbol{\C}{\mathbin}{AMSb}{"43}
\newcommand{\recip}[1]{\frac{1}{#1}}
\newcommand{\oo}{\infty}
\newcommand{\paren}[1]{\left(#1\right)}
\newcommand{\parsh}[1]{\frac{\partial}{\partial#1}}
\newcommand{\pplus}{\mathcal{P}_+}
\newcommand{\pminus}{\mathcal{P}_-}
\newcommand{\delt}{\Delta}
\newcommand{\linear}{\mathcal{L}_2\left(\ahol{1/2},\hol{1/2}\right)}
\newcommand{\algebra}{\mathfrak{sl}(2,\C)}
\newcommand{\nchoosek}[2]{\left(\!\!\begin{tabular}{c} $#1$ \\ $#2$ \end{tabular}\!\!\right)}
\newcommand{\bigo}{\mathcal{O}}
\newcommand{\tenslow}{\pi_{\otimes}\left(A^-\right)}
\newcommand{\hol}[1]{H^{#1}_{L^2}(\delt)}
\newcommand{\ahol}[1]{H^{#1}_{L^2}(\delt^*)}
\newcommand{\dz}[1]{(dz)^{#1}}
\newcommand{\dzh}{(dz)^{1/2}}
\newcommand{\phol}[1]{H^{#1}_{poly}(\delt)}
\newcommand{\pz}[1]{\left(\parsh{z}\right)^{#1}}
\newcommand{\ssym}{S^s\left(H^{-1}(\delt)\right)}
\newcommand{\opow}[1]{^{\odot #1}}
\newcommand{\symraise}{\pi_{\odot}\left(A^+\right)}
\newcommand{\symlow}{\pi_{\odot}\left(A^-\right)}
\newcommand{\symham}{\pi_{\odot}\left(E\right)}
\begin{document}

\maketitle

\begin{abstract}
This paper deals with group-theoretic generalizations of classical Hankel operators called higher-order Hankel operators. We relate higher-order Hankel operators to the universal enveloping algebra of the Lie algebra of vector fields on the unit disk. From this novel perspective, higher-order Hankel operators are seen to be linear differential operators. An attractive combinatorial identity is used to find the exact form of these differential operators.
\end{abstract}

\section{Introduction}

A classical Hankel operator is a map between Hilbert spaces whose matrix representation is constant along antidiagonals. A survey of these operators and their applications appears in \cite{Peller}. Hankel operators arise naturally in the study of holomorphic function spaces. Given $f(z)=\sum_{j\in\Z}f_jz^j\in L^2\paren{S^1}$ and $x(z)=\sum_{j=1}^{\oo}x_jz^j$, define the operators
$$\pplus f(z)=\sum_{j=0}^{\oo}f_jz^j,\qquad M_xf(z)=x(z)f(z),\qquad \pminus f(z)=\sum_{j=1}^{\oo}f_{-j}z^{-j}.$$
The projection $\pplus$ is known as the Cauchy-Szeg\H{o} projection, and $\pplus L^2\paren{S^1}$ is the space of holomorphic functions on the open unit disk with square-integrable boundary values. The operator $B_1(x)=\pplus M_x\pminus$ is a Hankel operator, whose matrix representation we will write
$$B_1(x)=\left(\begin{matrix}
\vdots & & & \\
x_3 & & \iddots  & \\
x_2 & x_3 & & \\
x_1 & x_2 & x_3 & \ldots \\
\end{matrix} \right).$$
The function $x$ is called the symbol of $B_1(x)$, and the map $x\longmapsto B_1(x)$ is conformally equivariant in a sense explained in \S 2. Group-theoretic generalizations of this map, which are also conformally equivariant, were discovered in \cite{JP}. Here, we derive the following explicit expressions for these higher-order Hankel operators.
\begin{thm}
\label{main}
The higher-order Hankel operator of order $s+1$ with symbol $x(z)(dz)^{-s}$, where $x(z)=\sum_{j=s+1}^{\oo}x_jz^{s+j}$, has the formula
$$B_{s+1}(x)=\pplus L_s(x)\pminus,$$
where $L_s(x)$ is the differential operator
$$L_s(x)=\sum_{j=0}^s\recip{s!}\nchoosek{s}{j}\nchoosek{s+j}{j}x^{(s-j)}\left(\parsh{z}\right)^j.$$
\end{thm}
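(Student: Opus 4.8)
The plan is to reduce the claimed operator identity to a computation on a basis and from there to a single combinatorial identity. By linearity and continuity in the symbol, it suffices to take monomial symbols $x(z)=z^m$ with $m\ge 2s+1$ and to evaluate both sides on the basis vectors $z^{-n}$, $n\ge 1$, of $\pminus\hardy$. For the proposed differential operator the computation is immediate: since $x^{(s-j)}=\frac{m!}{(m-s+j)!}z^{m-s+j}$ and $\partial_z^j z^{-n}=(-1)^j\frac{(n+j-1)!}{(n-1)!}z^{-n-j}$, every term produces the same power $z^{m-s-n}$, so
$$L_s(z^m)z^{-n}=z^{m-s-n}\sum_{j=0}^s \frac{1}{s!}\binom{s}{j}\binom{s+j}{j}(-1)^j\frac{m!}{(m-s+j)!}\frac{(n+j-1)!}{(n-1)!}.$$
Applying $\pplus$, the $(m,n)$ matrix entry of $\pplus L_s(x)\pminus$ is exactly this finite sum, and it vanishes unless $m-s-n\ge 0$.

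Next I would compute the same matrix entry for the group-theoretic operator $B_{s+1}$ from its definition recalled in \S 2. Realizing $B_{s+1}(x)$ as the image of the symbol—a vector of weight $-s$—under the conformally equivariant map into operators between the holomorphic function spaces should express $\langle z^{m-s-n},\,B_{s+1}(z^m)z^{-n}\rangle$ as a single ratio of factorials coming from the underlying $\algebra$-representation data, with no summation. Matching the two expressions then reduces the theorem to the assertion that the finite $j$-sum above collapses to that closed form—that is, that a terminating hypergeometric sum has the predicted value. Proving this identity, in which the special arithmetic of the coefficients $\frac{1}{s!}\binom{s}{j}\binom{s+j}{j}$ is forced, is the analytic heart of the matter.

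The main obstacle is precisely this combinatorial identity, whose summand couples three factorial ratios in $m$, $n$, and $j$. I expect the most transparent route to be a two-step use of equivariance: first impose covariance under the lowering and scaling generators $\partial_z$ and $z\partial_z$ of $\algebra$ to force the shape $L_s(x)=\sum_{j=0}^s c_j\,x^{(s-j)}\partial_z^j$ and leave only the constants $c_j$ undetermined; then impose covariance under the raising generator $z^2\partial_z$ and match the coefficient of each $\partial_z^k$ in the resulting intertwining relation to obtain the two-term recursion $j^2 c_j=(s+j)(s-j+1)c_{j-1}$. Its unique solution with $c_0=\tfrac{1}{s!}$ is the stated product of binomials—these are, up to sign, the coefficients of the shifted Legendre polynomials—so verifying that this recursion is equivalent to the hypergeometric identity above, and that the normalization reproduces the classical case $s=0$, $B_1(x)=\pplus M_x\pminus$, finishes the argument.
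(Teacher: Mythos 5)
Your proposal, in its final form (the equivariance route of your last paragraph), is correct, but it is genuinely different from the paper's proof. The paper proceeds geometrically: Proposition \ref{equisection} constructs an equivariant cross-section of $\ssym\longrightarrow H^{-s}(\delt)$, realizing the symbol inside $\mathcal{U}\paren{H^{-1}(\delt)}$ and thereby showing that $B_{s+1}(x)=\pplus L_s(x)\pminus$ for \emph{some} differential operator of order $\leq s$; the coefficients are then pinned down by the single lowest-weight-vector equation $\pplus L_s\paren{z^{2s+1}}=l_s$ (Proposition \ref{lowestweight} and Lemma \ref{form}), whose linear system is solved by LU-factoring the Pascal matrix. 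You instead work infinitesimally: covariance under $A^-$ and $E$ forces $c_j(x)=a_jx^{(s-j)}$, and covariance under $A^+$ forces your two-term recursion. I checked that recursion is exactly right: with $\pi_{1/2}(A^+)=z^2\parsh{z}+z$ on half-densities and $A^+\cdot x=z^2x'-2szx$ on symbols, one finds $\left[\pi_{1/2}(A^+),x^{(s-j)}\pz{j}\right]=z^2x^{(s-j+1)}\pz{j}-2jzx^{(s-j)}\pz{j}-j^2x^{(s-j)}\pz{j-1}$, and matching against $L_s(A^+\cdot x)$ leaves precisely $j^2a_j=(s+j)(s-j+1)a_{j-1}$, whose solution with $a_0=\recip{s!}$ is $\recip{s!}\nchoosek{s}{j}\nchoosek{s+j}{j}$. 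Your route is shorter and more elementary --- it replaces both the cross-section construction and the Pascal-matrix inversion with a recursion solved by inspection --- and it makes rigorous the coefficient shape that the paper only motivates by ``the cases $s=0$ and $s=1$ suggest.'' What it does not deliver is the paper's stated point: the enveloping-algebra interpretation of the symbols, and the \S 7 identities that fall out of the matrix factorization.

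Two repairs are needed in your write-up. First, the expectation in your second paragraph --- that the matrix entries of the abstract $B_{s+1}$ are single factorial ratios ``with no summation'' --- is false for $s\geq2$: the $(m,n)$ entry is $x_m$ times a degree-$s$ polynomial in $n$, and for $s=2$ that polynomial, $\recip{2}\paren{m(m-1)-6mn+6n(n+1)}$, does not factor over $\Q$; these entries are values of genuinely hypergeometric (Hahn-type) polynomials, so the case $s=1$, where the entry is $m-2n$, is misleading. Since you abandon this route, nothing breaks, but it cannot be kept as a fallback. Second, your recursion only shows there is a unique equivariant map \emph{of the ansatz form}; to conclude that it is $B_{s+1}$ you must explicitly invoke the multiplicity-one decomposition $\hol{1/2}\otimes\hol{1/2}=\hol{1}\oplus\hol{2}\oplus\cdots$ from \S 2 (together with the observation that $\pplus L_s(z^m)\pminus$ has finite rank, hence is Hilbert--Schmidt). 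The paper leans on the same uniqueness, but in your argument it is the only bridge between the operator you construct and the operator the theorem is about, so it must be stated.
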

\noindent In the case $s=1$, this operator has the matrix representation
$$B_2(x)=\left(\begin{matrix} 
\vdots & & & & \\
4x_5 & & & \iddots & \\
3x_4 & 2x_5 & & & \\
2x_3 & x_4 & 0 &  & \\
x_2 & 0 & -x_4 & -2x_5 & \\
0 & -x_2 & -2x_3 & -3x_4 & -4x_5 & \ldots
\end{matrix} \right).$$

The higher-order Hankel forms introduced in \cite{JP} are related to the transvectants $\tau^j_{k,l}$ of classical invariant theory \cite{Gordan}, and to the Rankin-Cohen brackets appearing in the theory of modular forms \cite{R, Cohen}. Our map $B_{s+1}$ is the adjoint of the transvectant $\tau^s_{\recip{2},\recip{2}}$. These objects and their relationships have been studied from many perspectives \cite{EG, OS, P, PZ, RTY}. While the content of Theorem \ref{main} is basically known, our method of proof - viewing $B_{s+1}(x)$ as an element of a universal enveloping algebra - appears to have some novelty. 

The rest of the paper is as follows. In \S 2, we introduce spaces $\hol{m}$ of sections of line bundles which are isomorphic to weighted Bergman spaces, and a group action on them. In \S 3, we outline the proof of Theorem \ref{main}, which involves first showing that $B_{s+1}$ is a linear differential operator of order $\leq s$, and then determining its coefficients. In \S 4, we find embeddings of the spaces $\hol{-s}$ into the universal enveloping algebra of the Lie algebra of vector fields on the unit disk. These embeddings determine the form of $B_{s+1}$. In \S 5, we find the image under $B_{s+1}$ of a key element of $\hol{-s}$. In \S 6, we prove Theorem \ref{main}, using several identities for binomial coefficients. In \S 7, we present a pair of binomial coefficient identities resulting from Theorem \ref{main}, and in \S 8 we relate the maps $B_{s+1}$ to transvectants and higher-order Hankel forms.

\section{Notation}

The group 
$$G=PSU(1,1)=\left\{g=\pm\left(\begin{matrix} a&b\\ \bar {b}&\bar {a}\end{matrix} \right):\vert a\vert^2-\vert b\vert^2=1\right\}$$
acts on the Riemann sphere $\hat {\Bbb C}$ by linear fractional transformations, 
$$g:z\longmapsto\frac {\bar {b}+\bar {a}z}{a+bz}.$$
Let $\Delta$ and $\Delta^*$ denote the open unit disks around $0$ and infinity. The decomposition 
\begin{align}\label{decomp}
\hat{\C}=\Delta^{*}\sqcup S^1\sqcup\Delta,
\end{align}
is stable under the action of $G$. Restricting its action to $\Delta$ identifies $G$ with the group of conformal automorphisms of $\Delta$.

For each half-integer $m$, the action of $G$ on $\hat{\C}$ lifts to an action of $SU(1,1)$ on $\kappa^m$, the $m^{\mathrm{th}}$ (tensor) power of the canonical bundle.  This induces an action of $SU(1,1)$ on the space of holomorphic sections of $\kappa^m|_{\Delta}$, the holomorphic differentials of degree $m$ on $\Delta$.  We denote such a differential by $f(z)(dz)^m$, and the space of all such differentials by $H^m(\delt)$.  The action of $g\in SU(1,1)$ on $H^m(\delt)$ is
\begin{align}\label{action}
g:f(z)(dz)^m\longmapsto f\left(\frac {-\bar{b}+az}{\bar{a}-bz}\right)(\bar{a}-bz)^{-2m}(dz)^m.
\end{align}

The action of $SU(1,1)$ on $H^m(\delt)$ is essentially unitary for $m>0$; the dense Hilbert subspace of $H^m(\delt)$ with Hermitian inner product
\begin{align}\label{innerproduct}
\langle f(z)(dz)^m,g(z)(dz)^m\rangle_m=\begin{cases}
&\int_{\delt}f(z)\bar{g}(z)(1-|z|^2)^{2m-2}dzd\bar{z},\qquad m=1,\frac{3}{2},2,\ldots,\\
&\int_{S^1}f(z)\bar{g}(z)dz,\qquad m=\frac{1}{2}.
\end{cases}
\end{align}
will be denoted $\hol{m}$. Thus, $f(z)(dz)^m\in\hol{m}$ if and only if $f(z)$ belongs to the weighted Bergman space $A^2_{2m-2}(\delt)$.

The space of sections of $\kappa^{1/2}|_{S^1}$ will be denoted $\Omega^{1/2}\paren{S^1}$. The action of $SU(1,1)$ and the $SU(1,1)$-invariant Hermitian inner product on this space are also given by (\ref{action}) and (\ref{innerproduct}), with $m=1/2$. We will denote the Hilbert subspace by $\Omega^{1/2}_{L^2}\paren{S^1}$, so that $f(z)(dz)^{1/2}\in\Omega^{1/2}\paren{S^1}$ if and only if $f(z)\in L^2\paren{S^1}$.

The decomposition (\ref{decomp}) corresponds to an $SU(1,1)$-stable decomposition
\begin{align}\label{hardy}
\Omega^{1/2}_{L^2}\paren{S^1}=H^{1/2}_{L^2}(\Delta)\oplus H^{1/2}_{L^2}(\Delta^*).
\end{align}
We abuse notation slightly by using $\pplus$ to denote the projection onto $\hol{1/2}$, and $\pminus$ to denote the projection onto $\ahol{1/2}$. As noted, $\pplus f(z)(dz)^{1/2}=f_+(z)(dz)^{1/2}$, where $f_+(z)$ is the Cauchy-Szeg\H{o} projection (Cauchy transform) of $f(z)$. 

If $\theta=f(z)(dz)^{1/2}\in H^{1/2}_{L^2}$ and $\eta=g(z)(dz)^{1/2}\in H^{1/2}_{L^2}(\Delta^*)$, then $\theta\eta=fgdz$ is a one density on $S^1$ that can be integrated to a nontrivial constant, so $\left(H^{1/2}_{L^2}(\Delta)\right)^*=H^{1/2}_{L^2}(\Delta^*)$. The decomposition (\ref{hardy}) induces a diagonal action of $SU(1,1)$ on Hilbert-Schmidt operators sending $\ahol{1/2}$ to $\hol{1/2}$, via the identifications
$$\mathcal{L}_2\left(\ahol{1/2},\hol{1/2}\right)=\hol{1/2}\otimes\left(\ahol{1/2}\right)^*=\hol{1/2}\otimes\hol{1/2}.$$
Calculating the character of the rotation group $S^1\subset PSU(1,1)$, one sees that
$$\hol{1/2}\otimes\hol{1/2}=\hol{1}\oplus\hol{2}\oplus\hol{3}\oplus\ldots$$
so for each $m\in\N$ there is an intertwining map $\hol{m}\longrightarrow\linear$.

Let $x\in H^0(\delt)/\C$. Then $x$ acts on $\Omega^{1/2}(S^1)$ by multiplication. With respect to the decomposition (\ref{hardy}), the multiplication operator $M_x$ can be written
$$M_x=\left(\begin{matrix} 
A & B\\
C & D
\end{matrix} \right).$$
$B=B_1(x)=\pplus M_x\pminus\in\linear$ is the Hankel operator associated to $x$. The action of $G$ on $H^0(\delt)/\C$ intertwines with the diagonal action of $SU(1,1)$ on $\linear$, and $B_1(x)$ is Hilbert-Schmidt precisely when $\theta=x'dz\in\hol{1}$. Thus $B_1$ is an $SU(1,1)$-equivariant map from $\hol{1}$ to $\linear$. This motivates the following.

\begin{dfn}
The Hankel operator of order $m$ with symbol $\theta\in\hol{m}$ is the image of $\theta$ under the intertwining map
$$\hol{m}\longrightarrow\linear=\hol{1/2}\otimes\hol{1/2}.$$
The space of Hankel operators of order $m$ is the image of $\hol{m}$ under this map.
\end{dfn}

\section{An Outline of the Proof}

Our geometric interpretation of higher-order Hankel operators relies on two facts. First, for $m>0$, the action of $SU(1,1)$ on $H^m(\delt)$ is essentially irreducible. But if $m=-s$ for $s\in\N$, there is a short exact sequence
$$0\longrightarrow\C^{2s+1}=\mathrm{span}\left\{\dz{-s},\ldots,z^{2s}\dz{-s}\right\}\longrightarrow H^{-s}(\delt)\xlongrightarrow{I_s} H^{s+1}(\delt)\longrightarrow 0.$$
$H^{-s}(\delt)/\C^{2s+1}$ can be given a Hilbert space structure via the intertwining map $I_s$, and we refer to this Hilbert space as $\hol{-s}$. We study the maps $B_{s+1}:\hol{-s}\longrightarrow\hol{1/2}\otimes\hol{1/2}$.

Second, in the case $s=1$, $H^{-1}(\delt)$ is the Lie algebra of complex vector fields on the unit disk (note $\dz{-1}=\parsh{z}$) . These vector fields act on $\Omega^{1/2}(S^1)$ as differential operators of order $1$ via the Lie derivative. Composing this action with $\pplus,\pminus$ gives us a Hankel operator of order two. For $\theta=f(z)\dzh\in\ahol{1/2}$ and $x(z)\parsh{z}\in\hol{-1}$, a formal calculation leads to
$$B_2(x)\theta:=\pplus\recip{2}L_{x(z)\parsh{z}}f(z)\dzh=\pplus\left(\recip{2}x'f+xf'\right)\dzh.$$

The key to the higher-order Hankel operators is that the above action of $H^{-1}(\delt)$ determines an action of $H^{-s}(\delt)$ on $\Omega^{1/2}(S^1)$, as differential operators of order $\leq s$. Each representation of $H^{-1}(\delt)$ corresponds to a representation of the universal enveloping algebra $\mathcal{U}\left(H^{-1}(\delt)\right)$. The elements of $\mathcal{U}\left(H^{-1}(\delt)\right)$ act as linear differential operators on $\Omega^{1/2}(S^1)$. Let $S^s\left(H^{-1}(\delt)\right)$ be the space of symmetric tensors of order $\leq s$ over $H^{-1}(\delt)$. $S\left(H^{-1}(\delt)\right)=\bigoplus_{s=0}^{\oo}S^s\left(H^{-1}(\delt)\right)$ is isomorphic as a filtered vector space to $\mathcal{U}\left(H^{-1}(\delt)\right)$, so elements of $S^s\left(H^{-1}(\delt)\right)$ can be mapped to linear differential operators of order $\leq s$ on $\Omega^{1/2}(S^1)$. In \S 3, we show the existence of a $G$-equivariant embedding of $H^{-s}(\delt)$ into $S^s\left(H^{-1}(\delt)\right)$. Thus the image of $H^{-s}(\delt)$ under $B_{s+1}$ contains linear differential operators of order $\leq s$. 

$B_{s+1}$ is unique up to multiplication by a constant, and must map the lowest-weight vector in $H^{-s}(\delt)/\C^{2s+1}$, namely $z^{2s+1}\pz{s}$, to the lowest-weight vector $l_s$ of weight $2(s+1)$ in $\hol{1/2}\otimes\hol{1/2}$. We find the form of $l_s$ in \S 4, and use it to find $B_{s+1}$ in \S 5.

Finally, a technical point. The elements of $H^{-1}(\delt)$ may not extend to holomorphic vector fields on $S^1$, so neither $H^{-1}(\delt)$ nor $\mathcal{U}\left(H^{-1}(\delt)\right)$ act naturally on $\Omega^{1/2}(S^1)$ a priori. However, the polynomial sections of $\kappa^{-1}|_{S^1}$, denoted by $\phol{-1}$, do extend to $S^1$, and the space of polynomial sections in $H^{-s}(\delt)$, $\phol{-s}$, is mapped into $\mathcal{U}\left(\phol{-1}\right)$ by the embedding from \S 3. Since $\phol{-s}$ is dense in $\hol{-s}$, the action of $\phol{-s}$ on $\Omega^{1/2}(S^1)$ extends to an action of $\hol{-s}$.

\section{The Equivariant Cross-Section of $S^s\left(H^{-1}(\delt)\right)\longrightarrow H^{-s}(\delt)$}

$\ssym$ sits inside $T^s\left(H^{-1}(\delt)\right)$, the space of tensors of order $s$. We will write a monomial in $\ssym$ as
$$\bigodot_{i=1}^sf_i(z)\parsh{z}=\recip{s!}\sum_{\sigma\in S_s}\left(f_{\sigma(1)}(z)\parsh{z}\otimes\ldots\otimes f_{\sigma(s)}(z)\parsh{z}\right),$$
where $S_s$ is the symmetric group on $s$ elements. For each $s$, $S^s\left(H^{-1}(\delt)\right)$ projects onto $H^{-s}(\delt)$ via the map
$$\mathcal{P}_s:\bigodot_{i=1}^sf_i(z)\parsh{z}\longmapsto\prod_{i=1}^sf_i(z)\left(\parsh{z}\right)^s.$$
Let $d_p=z^p\parsh{z}$. We refer to $p$ as the \emph{power} of $d_p$. The vectors 
$$\left\{\bigodot_{i=1}^sd_{p_i}\quad\Bigg|\quad p_1\geq p_2\geq\ldots\geq p_s,\quad p_i\in\N\right\}$$
are a basis for $\ssym$. We refer to $\sum_{i=1}^sp_i$ as the \emph{total power} of such a basis vector.  

The infinitesimal action of $\mathfrak{sl}(2,\C)$ on $H^m(\delt)$, in terms of the coordinate $f$, is
$$A^-=\left(\begin{matrix} 0&0\\
1&0\end{matrix} \right)\longmapsto-\frac{\partial}{\partial z},\qquad
A^+=\left(\begin{matrix} 0&1\\
0&0\end{matrix} \right)\longmapsto z^2\frac{\partial}{\partial z}+2mz,\qquad E=\left(\begin{matrix} 1&0\\
0&-1\end{matrix} \right)\longmapsto 2z\frac {\partial}{\partial z}+2m.$$ 
The action of $\mathfrak{sl}(2,\C)$ on $T^s\left(H^{-1}(\delt)\right)$ is by a Liebniz rule, and preserves the subspace of symmetric tensors. We denote this action by $\pi_{\odot}(X)$. The following lemma is key.

\begin{lem}
$\left(\symraise\right)^{2s}\left(d_0\right)^{\odot s}=C_s\left(d_2\right)^{\odot s}$, for some constant $C_s\in\R$.
\end{lem}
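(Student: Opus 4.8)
The plan is to recognize the three vector fields $d_0, d_1, d_2$ as spanning a copy of the adjoint (spin-one) representation of $\algebra$ sitting inside $H^{-1}(\delt)$, and then to read the lemma off the representation theory of its symmetric powers. First I would record how the generators act on the powers $d_p = z^p\parsh{z}$. With $m=-1$, the formula for $A^+$ gives $\pi(A^+)d_p = (p-2)d_{p+1}$, while $E$ assigns $d_p$ the weight $2p-2$; in particular $\pi(A^+)d_2 = 0$, and $d_0, d_1, d_2$ carry weights $-2, 0, 2$. Hence $V := \mathrm{span}\{d_0, d_1, d_2\}$ is an $\algebra$-submodule of $H^{-1}(\delt)$, irreducible of spin one, with lowest weight vector $d_0$ (which $A^-=-\parsh{z}$ annihilates) and highest weight vector $d_2$. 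Since $A^+$ carries $V$ into itself, the Leibniz extension $\symraise$ preserves the finite-dimensional subspace $S^s(V)\subset\ssym$, so the entire calculation may be carried out inside $S^s(V)$.

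Next I would count weights in $S^s(V)$. The basis monomial $(d_0)\opow{a}\odot(d_1)\opow{b}\odot(d_2)\opow{c}$ with $a+b+c=s$ has weight $2(c-a)$, so the extreme weights $+2s$ and $-2s$ each span a single one-dimensional weight space, namely $\C\,(d_2)\opow{s}$ and $\C\,(d_0)\opow{s}$ respectively. Because $\symraise$ raises the weight by $2$, the vector $\paren{\symraise}^{2s}(d_0)\opow{s}$ has weight $-2s + 2(2s) = 2s$ and is therefore forced to be a scalar multiple $C_s(d_2)\opow{s}$; the scalar is real because every structure constant above is real. This already establishes the stated form of the lemma.

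Finally I would argue that $C_s \neq 0$, which is the substantive content and requires the module structure rather than mere weight bookkeeping. The vector $(d_0)\opow{s}$ lies in the minimal weight space of the finite-dimensional module $S^s(V)$, so it is annihilated by $A^-$ and generates an irreducible submodule of dimension $2s+1$ whose highest weight vector is $(d_2)\opow{s}$. The classical $\mathfrak{sl}_2$ ladder computation inside this single irreducible, in which each raising step from the bottom weight up to the top is nonzero, shows that $\paren{\symraise}^{2s}(d_0)\opow{s}$ is a nonzero multiple of $(d_2)\opow{s}$, and indeed $C_s$ can be recovered as the product of the ladder coefficients if an explicit value is wanted. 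The one point to guard against is the worry that the many intermediate basis monomials produced by iterating $\symraise$ might contribute extra weight-$2s$ terms of larger total power; the one-dimensionality of the weight-$2s$ space dispatches this at once, and is precisely why I would organize the proof around the $\algebra$-action rather than expanding $\paren{\symraise}^{2s}(d_0)\opow{s}$ by hand.
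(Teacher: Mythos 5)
Your argument is correct, and it reaches the lemma by a more representation-theoretic route than the paper's, which is purely combinatorial: there one observes that each application of $\symraise$ raises total power by one, that $A^+d_2=0$ keeps every factor's power $\leq 2$ throughout the iteration, and that the unique partition of $2s$ into at most $s$ parts, each at most $2$, is $(2,\dots,2)$. Your restriction to the submodule $S^s(V)$, $V=\mathrm{span}\{d_0,d_1,d_2\}$, repackages the same key fact ($A^+d_2=0$), and your one-dimensional weight-$2s$ space is exactly the paper's unique partition (within tensor degree $s$ one has weight $=2(\text{total power})-2s$, so the two bookkeepings coincide); to this extent the mechanisms are parallel. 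What your final step genuinely buys is something the paper's proof does not deliver: since $(d_0)\opow{s}$ is a weight-$(-2s)$ vector killed by $\symlow$, complete reducibility makes it the bottom of a copy of the irreducible $(2s+1)$-dimensional $\algebra$-module, so the $2s$-fold ladder is nonvanishing and $C_s\neq 0$, with $C_s$ recoverable as a product of ladder coefficients. The paper's counting argument is silent on nonvanishing, and the lemma as stated does not claim it; yet $C_s\neq 0$ is tacitly needed later, since in Proposition \ref{equisection} every coefficient of $v_{2s+1}$ is proportional to $C_s$ (through the seed values $-C_s/3$ and $A_{41}=-C_s(s-1)/6$ and the homogeneous linear recursion), so $C_s=0$ would force $v_{2s+1}=0$ and the claimed cross-section would not exist. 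The trade-off is that the paper's proof is elementary and self-contained, while yours invokes complete reducibility of finite-dimensional $\algebra$-modules --- a fair price for the stronger conclusion.
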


\begin{proof}
The raising operator $\symraise$ maps a monomial of total power $p$ into a linear combination of monomials having total power $p+1$. Thus,
$$\left(\symraise\right)^{2s}\left(d_0\right)^{\odot s}=\sum_{p\in\mathcal{P}^{2s}_s}a_p\bigodot_{i=1}^sd_{p(i)},$$
where $\mathcal{P}^{2s}_s$ is the set of partitions of ${2s}$ into $s$ or fewer parts, arranged so that $p(i)\geq p(i+1)$. At the same time, $A^+d_2=0$, so none of the parts $p(i)$ can be greater than $2$. But there is only one partition of ${2s}$ into $s$ or fewer parts less than or equal to $2$, namely $p(i)=2$ for all $i$. 
\end{proof}

\begin{prop}
\label{equisection}
For each $s$, there is a $PSU(1,1)$-equivariant cross-section of $S^s\left(H^{-1}(\delt)\right)\longrightarrow H^{-s}(\delt)$.
\end{prop}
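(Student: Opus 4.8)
The plan is to construct the cross-section $\sigma:H^{-s}(\delt)\to\ssym$ as a homomorphism of $\algebra$-modules and then exponentiate the action to obtain $PSU(1,1)$-equivariance. Both spaces are graded by the eigenvalue of $\symham$ (equivalently of $E$), so I would first record the weights. Writing $v_k=z^k\pz{s}$, the space $H^{-s}(\delt)$ has a one-dimensional weight space in each weight $2(k-s)$, $k\ge 0$; its genuine lowest-weight vector is $v_0=\pz{s}$ of weight $-2s$, while $v_{2s+1}=z^{2s+1}\pz{s}$ is the lowest-weight vector of the quotient $H^{s+1}(\delt)$, of weight $2(s+1)$. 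On the other side, every $d_p$ has weight $2p-2\ge-2$, so the minimal weight occurring in $\ssym$ is exactly $-2s$, attained only by $(d_0)\opow{s}$. Since $\mathcal{P}_s$ is $\algebra$-equivariant and carries $(d_0)\opow{s}$ to $v_0$, any equivariant section must send $v_0\mapsto(d_0)\opow{s}$, and this normalization already forces $\mathcal{P}_s\circ\sigma=\mathrm{id}$ on the lowest-weight line.

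Next I would propagate $\sigma$ through the finite-dimensional submodule $\C^{2s+1}$ by raising, setting $\sigma(v_{k+1})=\frac{1}{k-2s}\symraise\,\sigma(v_k)$ for $0\le k<2s$. This is consistent precisely because of the Lemma: it gives $\sigma(v_{2s})\propto(d_2)\opow{s}$, and since $A^+d_2=0$ we get $\symraise\,\sigma(v_{2s})=0$, matching $A^+v_{2s}=(2s-2s)v_{2s+1}=0$. Thus $\sigma$ restricts to an isomorphism of the finite submodule of $H^{-s}(\delt)$ onto the copy of the $(2s+1)$-dimensional irreducible sitting inside $S^s(\algebra)\subset\ssym$, and is fully equivariant there. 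For the infinite part I would pick a single ``bridge'' vector $w_{2s+1}:=\sigma(v_{2s+1})$ of weight $2(s+1)$ and then raise again, $\sigma(v_{k+1})=\frac{1}{k-2s}\symraise\,\sigma(v_k)$ for $k\ge2s+1$ (now $k-2s\ge1$). A short computation with $[A^+,A^-]=E$ shows that $A^-$-equivariance then propagates automatically up the infinite tail, and that $\mathcal{P}_s\circ\sigma=\mathrm{id}$ propagates as well from the equivariance of $\mathcal{P}_s$; the entire construction therefore reduces to producing $w_{2s+1}$.

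The main obstacle is exactly this bridge vector. Because the raising operator degenerates at the junction ($A^+v_{2s}=0$), $w_{2s+1}$ cannot be recovered from the finite part, and a naive weight-matched choice such as $d_{2s+1}\odot(d_0)^{\odot(s-1)}$ fails: equivariance demands the two genuine conditions
\[\symlow\,w_{2s+1}=-(2s+1)\,\sigma(v_{2s})\propto(d_2)\opow{s},\qquad \mathcal{P}_s(w_{2s+1})=v_{2s+1},\]
and the first is a real linear constraint coupling $w_{2s+1}$ to the top of the finite part. I would settle it as follows: choosing any $w^0$ with $\mathcal{P}_s(w^0)=v_{2s+1}$ and writing $w_{2s+1}=w^0+\kappa$, the condition on $\kappa$ lives entirely in the kernel submodule $\ker\mathcal{P}_s\subset\ssym$ (applying $\mathcal{P}_s$ to the required identity is automatically consistent, using $A^-v_{2s+1}=-(2s+1)v_{2s}$). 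The existence of $\sigma$ thus comes down to a surjectivity statement for $\symlow$ on the relevant weight space of $\ker\mathcal{P}_s$, which I would resolve either by analyzing the lowest-weight vectors of $\ker\mathcal{P}_s$ or by writing $w_{2s+1}$ down explicitly. I expect this to be the technical heart of the argument; once $w_{2s+1}$ is in hand (as the later sections in effect pin down when they compute the image of $v_{2s+1}$), the map $\sigma$ is a complete $\algebra$-equivariant cross-section, and exponentiating the action to $SU(1,1)$ yields the desired $PSU(1,1)$-equivariant cross-section.
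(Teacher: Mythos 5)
Your setup coincides with the paper's own: weight counting forces $\sigma(v_0)=(d_0)\opow{s}$, the finite submodule $\C^{2s+1}$ is filled in by raising, the Lemma guarantees $\sigma(v_{2s})\propto(d_2)\opow{s}$ so that the degeneration $A^+v_{2s}=0$ is respected, and the entire problem reduces to producing a bridge vector $w_{2s+1}$ with $\symlow w_{2s+1}\propto(d_2)\opow{s}$ and $\mathcal{P}_s(w_{2s+1})=v_{2s+1}$. You are in fact more careful than the paper about why $A^-$-equivariance and the section property propagate up the infinite tail once the bridge exists. But your proposal stops exactly where the paper's proof begins to do real work: the existence of $w_{2s+1}$ is reformulated as a surjectivity claim for $\symlow$ on a weight space of $\ker\mathcal{P}_s$ and then left unproved, with two possible strategies named but neither carried out. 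That existence claim \emph{is} the proposition --- the breakdown of the raising operator at the junction of the short exact sequence is precisely why a cross-section is not automatic --- and it is what occupies nearly all of the paper's proof: there, the bridge vector (denoted $v_{2s+1}$ in the paper) is written as an explicit linear combination of monomials $d_p\odot(d_2)\opow{m}\odot(d_1)\opow{n}\odot(d_0)\opow{(s-m-n-1)}$ with undetermined coefficients $A_{pn}$, the lowering operator is applied term by term, and the $A_{pn}$ are shown to satisfy a linear recursion that terminates at $p=2s+1$ and hence is solvable.

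The gap is genuine but fillable along the lines you indicate, and arguably more cleanly than by the paper's recursion. Identify $\phol{-1}$ with $\C[z]$ via $d_p\leftrightarrow z^p$, so that $S^s\left(\phol{-1}\right)\cong\C[x_1,\dots,x_s]^{S_s}$, $\symlow$ becomes $-\left(\partial_{x_1}+\dots+\partial_{x_s}\right)$, and $\mathcal{P}_s$ becomes restriction to the diagonal $x_1=\dots=x_s$. The operator $\sum_i\partial_{x_i}$ maps the degree-$(q+1)$ part of $\C[x_1,\dots,x_s]^{S_s}$ onto the degree-$q$ part: after the change of variables $y_1=x_1$, $y_j=x_j-x_1$ it is $\partial_{y_1}$, which is surjective on the full polynomial ring, and averaging a preimage over $S_s$ (the Reynolds operator commutes with this invariant operator) gives a symmetric preimage. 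Moreover, if $u\in\ker\mathcal{P}_s$ has total power $q\geq 1$, then \emph{any} preimage $g$ with $\symlow g=u$ automatically lies in $\ker\mathcal{P}_s$, since $A^-\mathcal{P}_s(g)=\mathcal{P}_s(u)=0$ and $A^-$ is injective on the span of $z^{q+1}\pz{s}$. This supplies exactly the surjectivity on $\ker\mathcal{P}_s$ that your construction of $w_{2s+1}=w^0+\kappa$ requires. With that argument (or the paper's explicit ansatz) inserted, your outline becomes a complete proof; as written, its decisive step is missing.
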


\begin{proof}
Our strategy is to map the basis elements of $H^{-s}(\delt)$, namely the set $\left\{z^p\left(\parsh{z}\right)^s\right\}_{p=0}^{\oo},$ into $S^s\left(H^{-1}(\delt)\right)$, in such a way that the resulting cross-section is equivariant. 

It is clear from the action of $\symham$ on $\ssym$ that $z^p\left(\parsh{z}\right)^s$ must be mapped to an element having total power $p$. Thus, we map $\left(\parsh{z}\right)^s$ to the only monomial of total power zero, the vector $\paren{d_0}^{\odot s}$. We map $z^{2s+1}\left(\parsh{z}\right)^s$ into a vector $v_{2s+1}$ of total power $2s+1$. The images of all other basis vectors are obtained by applying the raising operator. The resulting cross-section will be equivariant provided that
\begin{align}\label{cond}
\symlow v_{2s+1}=\left(\symraise\right)^{2s}\paren{d_0}^{\odot s}=C_s\left(d_2\right)^{\odot s}.
\end{align}

In fact, we take
$$v_{2s+1}=-\frac{C_s}{3}d_3\odot\left(d_2\right)^{\odot(s-1)}-\sum_{4\leq p\leq2s+1}d_p\odot\sum_{\substack{m+n\leq s-1\\2m+n=2s+1-p}}A_{pn}\paren{d_2}\opow{m}\odot\paren{d_1}\opow{n}\odot\paren{d_0}\opow{\paren{s-m-n-1}}.$$
For example, for $s=2$, one has
$$-\frac{3}{C_2}v_5=d_3\odot d_2-\recip{2}d_4\odot d_1+\recip{10}d_5\odot d_0.$$
The monomials in $v_{2s+1}$ all have total power $2s+1$. The largest power $p$ ranges from $3$ to $2s+1$. Among basis elements with highest power $p$, the sum includes all those where the remaining powers are no greater than two. Thus the inner sum is really over partitions of $2s+1-p$ into $s-1$ or fewer parts, each part being $1$ or $2$. The  coefficients $A_{pn}$ are indexed by the highest power and $n$, the (symmetric tensor) exponent of $d_1$. If $p$ is even, $n$ is odd, and vice-versa. Modulo parity, $n$ ranges from $0$ to the minimum of $p-3$ and $2s+1-p$. Thus the range of $n$ increases until $p=s+2$, and then decreases to $n=0$ when $p=2s+1$. For all other $p,n$, take $A_{pn}=0$.

Because the monomials in $v_{2s+1}$ are all products of $d_p$, $d_2$, $d_1$, and $d_0$, applying $\symlow$ to each monomial results in at most three terms. In fact,
\begin{align*}
\symlow v_{2s+1}=&C_s\left(d_2\right)^{\odot(s)}+\left(\frac{2C_s(s-1)}{3}+4A_{41}\right)d_3\odot\left(d_2\right)^{\odot(s-2)}\odot d_1\\
&+\sum_{4\leq q\leq2s}d_q\odot\sum_{\substack{l+k\leq s-1\\2l+k=2s-q}}B_{qk}\paren{d_2}\opow{l}\odot\paren{d_1}\opow{k}\odot\paren{d_0}\opow{\paren{s-l-k-1}},
\end{align*}
where
$$B_{qk}=(q+1)A_{(q+1)k}+(2s-q-k+2)A_{q(k-1)}+(k+1)A_{q(k+1)}.$$
Thus, choosing $A_{41}=-\frac{C_s(s-1)}{6}$, and defining
\begin{align*}
A_{(p+1)n}=-\recip{p+1}\left[(2s-p-n+2)A_{p(n-1)}+(n+1)A_{p(n+1)}\right]
\end{align*}
for all other $p$ and $n$, one obtains (\ref{cond}). This recursion relation is linear, and terminates at $p=2s+1$. Thus it can be solved, and $v_{2s+1}$ exists and satisfies (\ref{cond}). This proves the Proposition.
\end{proof}

\section{Lowest Weight Vectors in $\linear$}

The action of $\algebra$ on $\linear=\hol{1/2}\otimes\hol{1/2}$ is again by a Liebniz rule, and will be denoted $\pi_{\otimes}(X)$. The irreducible subspaces of $\hol{1/2}\otimes\hol{1/2}$ are lowest-weight representations of $SU(1,1)$. We now identify the vectors in $\hol{1/2}\otimes\hol{1/2}$ annihilated by $\tenslow$.

\begin{prop}
\label{lowestweight}
The set of vectors
$$\left\{l_s:=\sum_{i=0}^{s}(-1)^i\nchoosek{s}{i}z^{s-i}\dzh\otimes z^i\dzh\right\}_{s=0}^{\oo}$$
are annihilated by the operator $\tenslow$. The vector $l_s$ has weight $2(s+1)$.
\end{prop}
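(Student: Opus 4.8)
The plan is to verify both claims by direct computation, exploiting the fact that $l_s$ is expressed explicitly in the monomial basis. For the weight claim, I would apply the operator $\tensham = \pi_{\otimes}(E)$ and check that $l_s$ is an eigenvector with eigenvalue $2(s+1)$. Recall that on $\hol{1/2}$ the Hamiltonian $E$ acts by $2z\,\parsh{z} + 2m$ with $m = 1/2$, so on a monomial $z^k\dzh$ it gives $(2k+1)z^k\dzh$. Under the Leibniz rule on the tensor product, $\tensham$ acting on $z^{s-i}\dzh \otimes z^i\dzh$ yields the scalar $(2(s-i)+1) + (2i+1) = 2s+2 = 2(s+1)$, independent of $i$. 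Since every term of $l_s$ has the same eigenvalue, $l_s$ is a weight vector of weight $2(s+1)$, as claimed. This is the easy half.

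For the annihilation claim, I would compute $\tenslow\, l_s$ term by term using the Leibniz rule. Since $A^- \mapsto -\parsh{z}$ on each factor, the action on a single tensor monomial is
$$\tenslow\left(z^{s-i}\dzh \otimes z^i\dzh\right) = -(s-i)\,z^{s-i-1}\dzh \otimes z^i\dzh - i\,z^{s-i}\dzh \otimes z^{i-1}\dzh.$$
Summing against the signed binomial weights $(-1)^i\nchoosek{s}{i}$ produces a linear combination of tensor monomials of the form $z^a\dzh \otimes z^b\dzh$ with $a+b = s-1$. The strategy is then to collect the coefficient of each such monomial and show it vanishes. The monomial $z^a\dzh \otimes z^b\dzh$ (with $a+b=s-1$) receives a contribution from the first term when $(s-i-1,i) = (a,b)$, i.e. $i=b$, and from the second term when $(s-i,i-1)=(a,b)$, i.e. $i=b+1$. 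Its total coefficient is therefore
$$-(s-b)(-1)^b\nchoosek{s}{b} - (b+1)(-1)^{b+1}\nchoosek{s}{b+1},$$
and I would check this is zero using the elementary identities $(s-b)\nchoosek{s}{b} = (b+1)\nchoosek{s}{b+1}$, which is just $\frac{s!}{b!(s-b-1)!}$ written two ways.

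The main obstacle, such as it is, lies not in any single identity but in setting up the bookkeeping of indices cleanly: the reindexing $i \mapsto i+1$ in the second sum must be aligned correctly with the first so that the two contributions to each monomial $z^a\dzh\otimes z^b\dzh$ can be compared, and one must handle the boundary terms ($i=0$ in the first sum, $i=s$ in the second) where one factor of the derivative annihilates the monomial. I expect these endpoint terms to cause no trouble because the corresponding binomial coefficient combination degenerates gracefully (the multiplicative factor $(s-i)$ or $i$ vanishes exactly where needed), but confirming their cancellation is the one spot requiring care. Once the interior and boundary coefficients are all shown to vanish, the proof is complete.
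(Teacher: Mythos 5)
Your proposal is correct and follows essentially the same route as the paper: a direct Leibniz-rule computation in the monomial basis, with the weight claim reduced to the observation that each term $z^{s-i}\dzh\otimes z^i\dzh$ has eigenvalue $2(s+1)$, and the annihilation claim reduced to the cancellation $(s-b)\nchoosek{s}{b}=(b+1)\nchoosek{s}{b+1}$ — the paper organizes this same cancellation as a telescoping sum after reindexing, but the underlying identity and the treatment of the boundary terms are identical to yours.
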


\begin{proof}
Let $b_p=z^p\dzh$. Applying $\tenslow$ to $b_{s-i}\otimes b_i$ results in two terms unless $i=0$ or $i=s$, so we pull these cases out of the sum $l_s$. Thus 
\begin{align*}
-\tenslow\left[l_s\right]=&sb_{s-1}\otimes b_0+\sum_{i=1}^{s-1}(-1)^i\nchoosek{s}{i}\tenslow\left[b_{s-i}\otimes b_i\right]+(-1)^{s}sb_0\otimes b_{s-1}.
\end{align*}
The middle term is
\begin{align*}
&\sum_{i=1}^{s-1}(-1)^i\nchoosek{s}{i}\left[(s-i)b_{s-i-1}\otimes b_i+ib_{s-i}\otimes b_{i-1}\right]\\
&=\sum_{i=1}^{s-1}(-1)^i\frac{s\ldots(s-i)}{i!}b_{s-i-1}\otimes b_i+\sum_{i=1}^{s-1}(-1)^i\frac{s\ldots(s-i+1)}{(i-1)!}b_{s-i}\otimes b_{i-1}\\
&=\sum_{i=1}^{s-1}(-1)^i\frac{s\ldots(s-i)}{i!}b_{s-i-1}\otimes b_i+\sum_{j=0}^{s-2}(-1)^{j+1}\frac{s\ldots(s-j)}{j!}b_{s-j-1}\otimes b_j\\
&=-sb_{s-1}\otimes b_0+\sum_{j=1}^{s-2}\left[(-1)^j+(-1)^{j+1}\right]\frac{s\ldots(s-j)}{j!}b_{s-j-1}\otimes b_j+(-1)^{s-1}sb_0\otimes b_{s-1}\\
&=-sb_{s-1}\otimes b_0+(-1)^{s-1}sb_0\otimes b_{s-1}.\\
\end{align*}
Thus,
$$A^-\left[l_s\right]=sb_{s-1}\otimes b_0-sb_{s-1}\otimes b_0+(-1)^{s-1}sb_0\otimes b_{s-1}+(-1)^ssb_0\otimes b_{s-1}=0.$$
To prove the second claim, simply notice that
$$E\left[b_{s-i}\otimes b_i\right]=2(s-i+i+1)b_{s-i}\otimes b_i=2(s+1)b_{s-i}\otimes b_i.$$
\end{proof}

\section{An Explicit Formula for $B_{s+1}$}

We will use the following lemma to prove Theorem \ref{main}. For $x(z)\paren{\parsh{z}}^s\in\hol{-s}$, let
$$\bigo_j\left(x\right)=\pplus x^{(s-j)}\left(\parsh{z}\right)^j\in\linear.$$

\begin{lem}
\label{form}
Let $k>s-j$. As an element of $\hol{1/2}\otimes\hol{1/2}$, $$\bigo_j\left(z^k\right)=\sum_{i=0}^{k-s-1}(-1)^j\frac{(i+j)!}{i!}\frac{k!}{(k-s+j)!}z^{(k-s-1)-i}\dzh\otimes z^i\dzh.$$
\end{lem}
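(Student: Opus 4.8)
The plan is to exploit bilinearity to reduce to a direct computation on the monomial symbol $x=z^k$, and then to unwind the identification $\linear=\hol{1/2}\otimes\hol{1/2}$ explicitly. First I would recall that $\bigo_j(z^k)$ is the Hilbert--Schmidt operator $\pplus\,(z^k)^{(s-j)}(\parsh{z})^j$ sending $\ahol{1/2}$ to $\hol{1/2}$, and that such an operator is completely determined by its images on the basis $\{z^{-l}\dzh\}_{l\geq 1}$ of $\ahol{1/2}$. Because $k>s-j$, the coefficient $(z^k)^{(s-j)}=\frac{k!}{(k-s+j)!}z^{k-s+j}$ is a genuine positive power of $z$, so the expression is well defined; when $k\leq s$ the projection will annihilate everything and both sides vanish, matching the empty sum on the right.

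Next I would carry out the elementary differentiation. Applying $(\parsh{z})^j$ to $z^{-l}\dzh$ gives $(-1)^j\frac{(l+j-1)!}{(l-1)!}z^{-l-j}\dzh$, and multiplying by $(z^k)^{(s-j)}$ produces
$$(-1)^j\frac{k!}{(k-s+j)!}\frac{(l+j-1)!}{(l-1)!}\,z^{k-s-l}\dzh.$$
Applying $\pplus$ retains this term precisely when its exponent $k-s-l$ is nonnegative, that is, for $1\leq l\leq k-s$; the remaining inputs are sent into $\ahol{1/2}$ and killed. This already determines the full matrix of $\bigo_j(z^k)$ as an operator.

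The one genuinely nonmechanical step is to repackage this operator as an element of $\hol{1/2}\otimes\hol{1/2}$. Here I would use the residue pairing $\langle z^a\dzh, z^{-l}\dzh\rangle=\int_{S^1}z^{a-l}\,dz$, which is nonzero exactly when $a=l-1$; hence under the identification $\paren{\ahol{1/2}}^*=\hol{1/2}$ the functional dual to $z^{-l}\dzh$ is $z^{l-1}\dzh$. Consequently the operator corresponds to the tensor $\sum_{l=1}^{k-s}\bigl[\,\bigo_j(z^k)(z^{-l}\dzh)\,\bigr]\otimes z^{l-1}\dzh$. Substituting the image computed above and reindexing by $i=l-1$ (so the first factor becomes $z^{(k-s-1)-i}\dzh$, the second becomes $z^i\dzh$, and the coefficient $\frac{(l+j-1)!}{(l-1)!}$ becomes $\frac{(i+j)!}{i!}$) yields exactly the claimed formula.

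I expect the index bookkeeping in this last identification---tracking the unit shift produced by the residue pairing---to be the main point requiring care; the differentiations and the projection are routine. As a consistency check I would verify the case $s=1$ against the displayed matrix for $B_2$, where $\bigo_0(z^k)$ and $\bigo_1(z^k)$, combined with the weights from Theorem \ref{main}, reproduce its entries.
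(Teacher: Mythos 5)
Your proposal is correct and follows essentially the same route as the paper's proof: compute the action of $\pplus x^{(s-j)}\pz{j}$ on negative-power monomials (the paper uses a general polynomial $f(z)=\sum f_{-n}z^{-n}$, which is the same thing by linearity), discard the terms killed by $\pplus$, and then convert the resulting finite-rank operator into a tensor via the residue pairing identification $\paren{\ahol{1/2}}^*=\hol{1/2}$, reindexing $i=l-1$. Your closing remark that the operator is determined by its values on the basis plays the same role as the paper's observation that the formula depends on only finitely many coefficients of $f$, so it extends from polynomial sections to all of $\ahol{1/2}$.
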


\begin{proof}
Let $f(z)\dzh\in H^{1/2}_{poly}\left(\delt^*\right)$, with $f(z)=\sum_{n=1}^Nf_{-n}z^{-n}$ for $N>k-s$. Then,
$$\left(\parsh{z}\right)^jf(z)=\sum_{n=1}^N(-1)^j\frac{(n+j-1)!}{(n-1)!}f_{-n}z^{-(n+j)}.$$
Also,
$$\left(\parsh{z}\right)^{s-j}z^k=\frac{k!}{(k-s+j)!}z^{k-s+j}.$$
Thus,
$$\left[\left(\parsh{z}\right)^{s-j}z^k\right]\left[\left(\parsh{z}\right)^jf(z)\right]=\sum_{n=1}^N(-1)^j\frac{(n+j-1)!}{(n-1)!}\frac{k!}{(k-s+j)!}f_{-n}z^{k-s-n},$$
and so
$$\bigo_j\left(z^k\right)f(z)\dzh=\sum_{n=1}^{k-s}(-1)^j\frac{(n+j-1)!}{(n-1)!}\frac{k!}{(k-s+j)!}f_{-n}z^{k-s-n}\dzh.$$
Since
$$f_{-n}z^{k-s-n}\dzh=\left(z^{k-s-n}\dzh\otimes z^{n-1}\dzh\right)f(z)\dzh,$$
the required formula is obtained after reindexing. But this formula depends on only the first $k-n$ coefficients of $f$, so it applies to all of $H^{1/2}_{L^2}(\delt^*)$.
\end{proof}

\begin{proof}[Proof of Theorem \ref{main}]
By Proposition \ref{equisection}, we know $B_{s+1}(v)=\pplus L_s(v),$ where
$$L_s(v)=\sum_{j=0}^sc_j(v)\pz{j}.$$
The cases $s=0$ and $s=1$ suggest that $c_j(v)=a_jv^{(s-j)}.$ Since $B_{s+1}$ is unique, we need only find coefficients $a_j$ which satisfy
\begin{align}\label{eqn}\pplus L_s\left(z^{2s+1}\right)=l_s. \end{align}
By Proposition \ref{lowestweight} and Lemma \ref{form}, (\ref{eqn}) is equivalent to
$$\sum_{j=0}^sa_j\sum_{i=0}^s(-1)^j\frac{(i+j)!}{i!}\frac{(2s+1)!}{(s+j+1)!}z^{s-i}\dzh\otimes z^i\dzh=\sum_{i=0}^s(-1)^i\nchoosek{s}{i}z^{s-i}\dzh\otimes z^i\dzh.$$
In other words, the $a_j$'s must solve the linear system
\begin{align*}
M_s\vec{a}_s:=\left(\begin{tabular}{c}
\\
\\
$\Big((-1)^j${\Large$\frac{(i+j)!}{i!}\frac{(2s+1)!}{(s+j+1)!}$}$\Big)_{i,j=0}^s$ \\
\\
\\
\end{tabular}\right)
\left(\begin{tabular}{c}
$a_0$\\
$a_1$\\
$\vdots$\\
$a_s$
\end{tabular}\right)=\left(\begin{tabular}{c}
$1$\\
$-s$\\
$\vdots$\\
$(-1)^s$\\
\end{tabular}\right)=:\vec{l}_s.
\end{align*}
Pleasantly, the matrix $M_s$ is easily factored. First, we rewrite $M_s$ as
$$N_sD_s:=\left(\frac{(i+j)!}{i!j!}\right)_{i,j=0}^s\mathrm{diag}\left((-1)^jj!\frac{(2s+1)!}{(s+j+1)!}\right)_{j=0}^s.$$
Now, a beautiful combinatorial identity comes into play, and one has
$$N_s=\paren{\nchoosek{i+j}{i}}_{i,j=0}^s=\paren{\nchoosek{i}{j}}_{i,j=0}^s\paren{\nchoosek{j}{i}}_{i,j=0}^s=:L_sU_s,$$
using (5.23) from \cite{concmath}. Let
$$\tilde{L}_s=\left((-1)^{i+j}\nchoosek{i}{j}\right)_{i,j=0}^s$$
and let $P=L_s\tilde{L}_s$. Then the $(i,j)$th entry of $P$ is
\begin{align*}
p_{ij}=&\sum_{k=j}^i(-1)^{k+j}\nchoosek{i}{k}\nchoosek{k}{j}=\nchoosek{i}{j}\sum_{k=j}^i(-1)^{k+j}\nchoosek{i-j}{i-k}
=\nchoosek{i}{j}\sum_{k=0}^{i-j}(-1)^k\nchoosek{i-j}{k}=\delta_{ij},
\end{align*}
since $\sum_{k=0}^n(-1)^k\nchoosek{n}{k}=(1-1)^n=\delta_{n0}$. Thus $\tilde{L}_s=L_s^{-1}$. Analogously,
$$U_s^{-1}=\left((-1)^{i+j}\nchoosek{j}{i}\right)_{i,j=0}^s.$$
Now,
\begin{align*}
L_s^{-1}\vec{l}_s=&\left(\sum_{k=0}^s(-1)^{j+k}(-1)^k\nchoosek{j}{k}\nchoosek{s}{k}\right)_{j=0}^s=\left((-1)^j\nchoosek{s+j}{j}\right)_{j=0}^s,
\end{align*}
again using (5.23) from \cite{concmath}. Next,
\begin{align*}
U_s^{-1}L_s^{-1}\vec{l}_s=&\left((-1)^j\sum_{k=0}^s\nchoosek{k}{j}\nchoosek{s+k}{k}\right)_{j=0}^s=\left((-1)^j\nchoosek{s+j}{j}\sum_{k=j}^s\nchoosek{s+k}{s+j}\right)_{j=0}^s\\
=&\left((-1)^j\nchoosek{s+j}{j}\nchoosek{2s+1}{s+j+1}\right)_{j=0}^s,
\end{align*}
using upper summation ((5.10) in \cite{concmath}), and finally,
\begin{align*}
\vec{a}_s=M_s^{-1}l_s=&\left(\frac{(s+j+1)!}{j!(2s+1)!}\nchoosek{s+j}{j}\nchoosek{2s+1}{s+j+1}\right)_{j=0}^s=\left(\recip{s!}\nchoosek{s}{j}\nchoosek{s+j}{j}\right)_{j=0}^s.
\end{align*}
\end{proof}

\section{Binomial Coefficient Identities}

The equivariance of $B_{s+1}$ implies a pair of identities relating sums of products of binomial coefficients.

\begin{prop}
For $s\in\N$, $k\geq 2s+1$, and $l=0,\ldots,k-s$,
\begin{align*}
&\sum_{j=0}^s(-1)^j\nchoosek{s+j}{j}\nchoosek{k}{s-j}\left[\nchoosek{l+j}{j}(k-s)-\nchoosek{l+j-1}{j-1}l\right]\\
&=\sum_{j=0}^s(-1)^j\nchoosek{s+j}{j}\nchoosek{k+1}{s-j}\nchoosek{l+j}{j}(k-2s),
\end{align*}
and for $s\in\N$, $i+j\geq s$,
\begin{align*}
&\sum_{l=0}^s(-1)^l\nchoosek{s+l}{l}\nchoosek{j+l}{l}\nchoosek{i+j+s+1}{s-l}=\sum_{l=0}^s(-1)^l\nchoosek{s}{j}\nchoosek{j}{l}\nchoosek{i}{s-l}.
\end{align*}
\end{prop}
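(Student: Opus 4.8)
The plan is to read both identities as the $\algebra$-equivariance of $B_{s+1}$ written out in the monomial basis, using the explicit description of $B_{s+1}(z^k)$ furnished by Theorem \ref{main} and Lemma \ref{form}. Combining those two results and collapsing factorials via $\frac1{s!}\binom sj\,j!\,(s-j)!=1$, one gets
\begin{equation*}
B_{s+1}(z^k)=\sum_{i}c^{(k)}_i\,b_{(k-s-1)-i}\otimes b_i,\qquad c^{(k)}_i=\sum_{j=0}^s(-1)^j\binom{s+j}{j}\binom{i+j}{j}\binom{k}{s-j},
\end{equation*}
where $b_p:=z^p\dzh$. Both asserted identities then fall out by computing a single coefficient of such a vector in two different ways.

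For the first identity I would exploit the raising operator on the symbol side. Since $A^+$ sends $z^k\pz{s}$ to $(k-2s)z^{k+1}\pz{s}$, equivariance gives $(k-2s)B_{s+1}(z^{k+1})=\pi_{\otimes}(A^+)B_{s+1}(z^k)$. On the target $\pi_{\otimes}(A^+)$ acts by the Leibniz rule with $A^+b_p=(p+1)b_{p+1}$, so comparing the coefficient of $b_{(k-s)-l}\otimes b_l$ on the two sides yields the three-term recursion
\begin{equation*}
(k-2s)\,c^{(k+1)}_l=\big((k-s)-l\big)c^{(k)}_l+l\,c^{(k)}_{l-1}.
\end{equation*}
Substituting the closed form for the $c$'s turns this into the first identity, except that the term carrying $l$ appears as $l\sum_j(-1)^j\binom{s+j}{j}\binom{k}{s-j}\big(\binom{l+j-1}{j}-\binom{l+j}{j}\big)$; one application of Pascal's rule, $\binom{l+j}{j}-\binom{l+j-1}{j}=\binom{l+j-1}{j-1}$, converts it to the stated $-\binom{l+j-1}{j-1}l$.

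For the second identity I would instead use the lowest-weight structure. The left-hand side is exactly $c^{(i+j+s+1)}_j$, i.e. the coefficient of $b_i\otimes b_j$ in $B_{s+1}(z^{i+j+s+1})$. To obtain a second expression, note that $A^+$ sends $z^{2s+1+n}\pz{s}$ to $(n+1)z^{2s+2+n}\pz{s}$ and that $B_{s+1}(z^{2s+1})=l_s$ by (\ref{eqn}) and Proposition \ref{lowestweight}; iterating gives $B_{s+1}(z^{i+j+s+1})=\frac1{(i+j-s)!}\,\pi_{\otimes}(A^+)^{\,i+j-s}l_s$, which is where $i+j\ge s$ enters (it makes the exponent nonnegative). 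Expanding $l_s=\sum_m(-1)^m\binom sm b_{s-m}\otimes b_m$, writing $\pi_{\otimes}(A^+)^n=\sum_{\alpha+\beta=n}\binom n\alpha (A^+)^\alpha\otimes(A^+)^\beta$ for the two commuting summands, and using $(A^+)^\alpha b_p=\frac{(p+\alpha)!}{p!}b_{p+\alpha}$, the coefficient of $b_i\otimes b_j$ becomes $\sum_m(-1)^m\binom sm\frac{i!}{(s-m)!}\frac{j!}{m!}\frac1{(i-s+m)!(j-m)!}$. Regrouping the factorials as $\binom{i}{s-m}\binom{j}{m}$ leaves the alternating sum $\sum_m(-1)^m\binom sm\binom jm\binom i{s-m}$ on the right-hand side, and equating the two computations of the coefficient proves the identity.

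The genuinely delicate points are bookkeeping rather than conceptual. In the first identity one must track the index shifts carefully (the vector $b_{(k-s)-l}\otimes b_l$ in $B_{s+1}(z^{k+1})$ receives contributions from both $b_{(k-s)-l}\otimes b_l$ and $b_{(k-s-1)-(l-1)}\otimes b_{l-1}$ under $\pi_{\otimes}(A^+)$), and the Pascal step must be applied to exactly the right pair of terms. In the second, the main obstacle is controlling the summation ranges: the constraints $i-s+m\ge0$ and $j-m\ge0$ must be reconciled with $0\le m\le s$ so that the regrouped factorials are honest binomial coefficients with no spurious boundary terms. A cleaner alternative that sidesteps the index juggling is to assemble the generating function $\sum_{n\ge0}t^nB_{s+1}(z^{2s+1+n})$; integrating the flow of $z^2\parsh z$ on each tensor factor gives the closed form $(z-w)^s\big/\big((1-tz)^{s+1}(1-tw)^{s+1}\big)$ under $b_p\otimes b_q\leftrightarrow z^pw^q$ and $l_s\leftrightarrow(z-w)^s$, after which the coefficient of $z^iw^j$, read off at $t=1$ by homogeneity, reproduces the same sum.
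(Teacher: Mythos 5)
Your proof is correct and follows exactly the paper's own argument: the paper's proof consists precisely of the two equivariance equations you expand — $B_{s+1}(A^+z^k)=\pi_{\otimes}(A^+)B_{s+1}(z^k)$ for the first identity, and $B_{s+1}\left((A^+)^{i+j-s}z^{2s+1}\right)=\left(\pi_{\otimes}(A^+)\right)^{i+j-s}B_{s+1}(z^{2s+1})$ for the second — so you have simply carried out in detail the basis expansion the paper leaves to the reader, including the correct coefficient formula $c^{(k)}_i=\sum_j(-1)^j\nchoosek{s+j}{j}\nchoosek{i+j}{j}\nchoosek{k}{s-j}$ obtained from Theorem \ref{main} and Lemma \ref{form}. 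One remark: your expansion of the second identity yields $\sum_{l=0}^s(-1)^l\nchoosek{s}{l}\nchoosek{j}{l}\nchoosek{i}{s-l}$ on the right-hand side, whereas the Proposition as printed has $\nchoosek{s}{j}$; your version is the correct one (for instance at $s=2$, $i=2$, $j=1$ the left-hand side equals $-3$ while the printed right-hand side equals $-2$), so the $\nchoosek{s}{j}$ in the statement is a typo for $\nchoosek{s}{l}$, and your derivation in fact uncovers it.
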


\begin{proof}
To prove the first identity, expand the equation
$$B_{s+1}\left(A^+z^k\right)=\pi_{\otimes}\left(A^+\right)B_{s+1}\left(z^k\right)$$
in the basis $\{z^i\dzh\otimes z^j\dzh\}$; to prove the second, expand the equation
$$B_{s+1}\left(\left(A^+\right)^{(i+j-s)}z^{2s+1}\right)=\left(\pi_{\otimes}\left(A^+\right)\right)^{(i+j-s)}B_{s+1}\left(z^{2s+1}\right).$$
\end{proof}

\section{Connections With Prior Work}

The \emph{transvectant} $\tau_{\frac{1}{2},\frac{1}{2}}^s=\tau_{s+1}$ is the essentially unique equivariant map $\tau_{s+1}:\hol{1/2}\otimes\hol{1/2}\longrightarrow\hol{s+1}$. Let $\theta=f(z)\dzh,\eta=g(z)\dzh\in\hol{1/2}$. Then
$$\tau_{s+1}(\theta\otimes\eta)=\tau_{s+1}(f,g)\dz{s+1}=\left(\sum_{j=1}^s(-1)^j\nchoosek{s}{j}^2f^{(s-j)}g^{(j)}\right)\dz{s+1}.$$
In \cite{JP}, this map is used to construct higher-order Hankel bilinear forms on the space $\hol{1/2}\otimes\hol{1/2}$. If $\nu=x(z)\pz{s}\in\hol{-s}$ and $\mu=y(z)\dz{s+1}\in\hol{s+1}$, then $\bar{\nu}\mu=\bar{x}ydz$ is a one-density on $S^1$, and so $\paren{\hol{s+1}}^*=\hol{-s}$. As in \cite{JP}, define the Hankel form of order $s+1$ with symbol $x$ to be
$$K_{s+1}(x)[f,g]=\int_{S^1}\bar{\nu}\tau_{s+1}(\theta\otimes\eta)=\int_{S^1}\bar{x}\tau_{s+1}(f,g)dz,$$
where $\nu=x(z)\pz{s}$. Since $B_{s+1}(x)\bar{\theta}\in\hol{1/2}$, another bilinear form is defined by
$$\tilde{K}_{s+1}(x)[f,g]=\langle B_{s+1}(x)\bar{\theta},\eta\rangle_{1/2}=\int_{S^1}\overline{B_{s+1}(x)\bar{f}}g d\theta,$$
and $K_{s+1}(x)$ and $\tilde{K}_{s+1}(x)$ are easily seen to be equivalent. Another expression for $\tilde{K}_{s+1}(x)$ is
$$\tilde{K}_{s+1}(x)[f,g]=\left\langle B_{s+1}(x),\theta\otimes\eta\right\rangle_{\otimes}=tr\left(B_{s+1}(x)(\theta\otimes\eta)^*\right),$$
where $\langle,\rangle_{\otimes}$ is the inner product on $\hol{1/2}\otimes\hol{1/2}$. Thus, $B_{s+1}$ is the adjoint of the map $\tau_{s+1}$, and we have the diagram
$$\begin{tabular}{clc}
$\hol{-s}$ & & \\
 & & \\
$I_s\downarrow$ & $\searrow^{B_{s+1}}$ & \\
 & & \\
$\hol{s+1}$ & $\xlongleftarrow{\tau_{s+1}}$ & $\linear$.\end{tabular}$$

\vspace{.4 cm}

\begin{center}
\textsc{Acknowledgements}
\end{center}

\vspace{.3 cm}

\noindent I would like to thank Doug Pickrell for his invaluable assistance, as well as Richard Rochberg, Xiang Tang, and Marcus S\"undhall for helpful conversations.

\vspace{.3 cm}

\end{document}